\newtheorem{thm}{Theorem}
\newtheorem{lem}[thm]{Lemma}
\newtheorem{claim}[thm]{Claim}
\newtheorem*{thm*}{Theorem}
\theoremstyle{definition}
\theoremstyle{remark}
\newcommand{\cC}{\mathcal{C}}
\newcommand{\cD}{\mathcal{D}}
\newcommand{\cR}{\mathcal{R}}
\newcommand{\cF}{\mathcal{F}}
\renewcommand{\le}{\leqslant}
\renewcommand{\ge}{\geqslant}
\begin{document}

\title[Small feedback vertex sets in planar digraphs]{Small feedback
  vertex sets\\ in planar digraphs}

\author{Louis Esperet} 
\author{Laetitia Lemoine} 
\author{Fr\'ed\'eric Maffray}
\address{Laboratoire G-SCOP (CNRS,
   Univ. Grenoble-Alpes), Grenoble, France}
\email{\{louis.esperet,laetitia.lemoine,frederic.maffray\}@grenoble-inp.fr}

\thanks{The authors are partially supported by ANR Project Stint
  (\textsc{anr-13-bs02-0007}), and LabEx PERSYVAL-Lab
  (\textsc{anr-11-labx-0025}).}

\date{}
\sloppy

\begin{abstract}
  Let $G$ be a directed planar graph on $n$ vertices, with no
  directed cycle of length less than $g\ge 4$. We prove that $G$
  contains a set $X$ of vertices such that $G-X$ has no directed
  cycle, and $|X|\le \tfrac{5n-5}9$ if $g=4$, $|X|\le \tfrac{2n-5}4$
  if $g=5$, and $|X|\le \tfrac{2n-6}{g}$ if $g\ge 6$. This improves
  recent results of Golowich and Rolnick.
\end{abstract}

\maketitle


A directed graph $G$ (or digraph, in short) is said to be
acyclic if it does not contain any directed cycle. The \emph{digirth} of a digraph $G$ is
the minimum length of a directed cycle in $G$ (if $G$ is acyclic, we
set its digirth to $+\infty$).
A \emph{feedback
  vertex set} in a digraph $G$ is a set $X$ of vertices such that
$G-X$ is acyclic, and the minimum size of such a set is
denoted by $\tau(G)$. In this short note, we study the maximum
$f_g(n)$ of $\tau(G)$ over all planar digraphs $G$ on $n$
vertices with digirth $g$.  Harutyunyan~\cite{Har11,HM15} conjectured
that $f_3(n)\le \tfrac{2n}5$ for all $n$. This conjecture was recently
refuted by Knauer, Valicov and Wenger~\cite{KVW16} who showed that
$f_g(n)\ge \tfrac{n-1}{g-1}$ for all $g\ge 3$ and infinitely many values of
$n$. On the other hand, Golowich and Rolnick~\cite{GR15} recently
proved that $f_4(n)\le \tfrac{7n}{12}$, $f_5(n)\le \tfrac{8n}{15}$, and $f_g(n)\le \tfrac{3n-6}{g}$
for all $g\ge 6$ and $n$. Harutyunyan and Mohar~\cite{HM15} proved
that the vertex set of every planar digraph of digirth at least 5 can
be partitioned into two acyclic subgraphs. This result was very recently extended to planar digraphs
of digirth 4 by Li and Mohar~\cite{LM16}, and therefore $f_4(n)\le
\tfrac{n}2$.

This
short note is devoted to the following result, which improves all
the previous upper bounds for $g \ge 5$ (although the improvement for
$g=5$ is rather minor). Due to the very recent result of Li and Mohar~\cite{LM16}, our
result for $g=4$ is not best possible (however its proof is of
independent interest and might lead to further improvements).

\begin{thm}\label{th:main}
For all $n\ge 3$ we have $f_4(n)\le \tfrac{5n-5}9$, $f_5(n)\le
\tfrac{2n-5}{4}$ and for all $g\ge 6$, $f_g(n)\le \tfrac{2n-6}{g}$.
\end{thm}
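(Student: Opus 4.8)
The plan is to prove the equivalent statement that every planar digraph $G$ on $n$ vertices with digirth at least $g$ contains an induced acyclic subgraph on at least $n-b_g(n)$ vertices, where $b_g(n)$ is the claimed bound; the complement $X$ of such a subgraph is then the desired feedback vertex set. I would argue by induction on $n$ (equivalently, by examining a minimum counterexample) and first dispose of the easy reductions. If $G$ is disconnected, or has a cut vertex, I treat the blocks separately, taking care that the additive constants $-5,-5,-6$ combine correctly. More importantly, if some vertex $v$ has in-degree or out-degree $0$, then $v$ lies on no directed cycle, so $\tau(G)=\tau(G-v)$; since each bound $b_g$ is increasing in $n$, the statement for $n-1$ yields the statement for $n$. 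After checking the small base cases directly, I may therefore assume that $G$ is $2$-connected and that every vertex has positive in- and out-degree, and I fix a plane embedding of $G$.

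The engine of the proof is Euler's formula $n-m+f=2$ combined with a discharging argument. Assigning to each vertex and each face an initial charge whose total is a negative constant, I would redistribute charge so that the total deficiency is concentrated on the vertices forced into $X$, thereby bounding $|X|$. The digirth hypothesis enters as the assertion that every directed cycle, and in particular every face whose boundary happens to be a directed cycle, has length at least $g$; this is precisely what forces $G$ to be sparse \emph{in the directed sense} and lets each deleted vertex pay for a long stretch of cyclic structure. The target ratios $\tfrac{5}{9}$, $\tfrac{2}{4}$, $\tfrac{2}{g}$ should emerge once the discharging rules are tuned, with the additive $-5,-5,-6$ tracing back to the $-2$ in Euler's formula.

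The main obstacle, and the reason the three regimes are separated, is that digirth bounds the lengths of \emph{directed} cycles but not the lengths of faces: a face may be short, even a triangle, since a transitive triangle is not a directed cycle, while every directed cycle is long. Overcoming this gap is the crux. I expect one must choose the acyclic subgraph so that the cycles bounding the relevant regions are themselves directed, or else separately bound the contribution of short non-directed faces through a local structural analysis of $G-X$ (using the positive in- and out-degree and $2$-connectivity established above). For $g\ge 6$ the slack is large enough that a single uniform discharging rule should suffice, whereas for $g=4$ and $g=5$ short faces are unavoidable and force both the weaker ratios $\tfrac{5}{9}$ and $\tfrac{2}{4}$ and an ad hoc treatment of small configurations, consistent with the authors' remark that the $g=4$ bound is not best possible.
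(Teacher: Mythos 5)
There is a genuine gap. Your proposal correctly identifies the central difficulty --- digirth controls the lengths of directed cycles, not of faces --- but it does not overcome it, and the route you propose (a vertex/face discharging argument that directly bounds $|X|$) is not the one that works here. The paper never discharges on $G$ at all. It bounds a different quantity: the maximum number $|\cC|$ of pairwise arc-disjoint directed cycles. After uncrossing, these cycles form a laminar family, hence a forest of nested regions; the region between a cycle and its $k$ children contributes at least $\tfrac32(g-2)k+\tfrac32 g$ to the total $\sum_F (3d(F)-6)=6n-12$, a bound obtained via an auxiliary planar bipartite incidence graph between the cycles and their shared vertices (Lemmas~\ref{lem:1} and~\ref{lem:2}). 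The digirth hypothesis enters exactly once, to show that a cycle bordering a short ($\ell_i=3$) region must have length at least $g+1$. Summing over the forest yields $|\cC|\le \tfrac{2n-5}{g-1}$, and $|\cC|\le\tfrac{2n-6}{g}$ for $g\ge 6$.

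The step your plan is missing entirely is the bridge from such a packing bound to the feedback vertex set: the Lucchesi--Younger min-max theorem, by which a minimum set $A$ of arcs meeting all directed cycles satisfies $|A|=|\cC|$. Any vertex set covering $A$ is a feedback vertex set, so $|X|\le|A|$ gives the $g=5$ and $g\ge 6$ bounds immediately. For $g=4$ one additionally needs the observation of Golowich and Rolnick that $|X|\le\tfrac13(n+|A|)$ (via a large independent set in the graph spanned by $A$), which combined with $|A|\le\tfrac{2n-5}{3}$ produces $\tfrac{5n-5}{9}$ --- this ratio is not a discharging constant and would not ``emerge once the discharging rules are tuned.'' Without a min-max or LP-duality ingredient of this kind, a discharging argument on $G$ has no mechanism for converting sparsity into a small feedback vertex set, and your suggestion to choose the acyclic subgraph so that the bounding cycles are directed is not developed enough to fill this hole.
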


In a planar graph, the degree of a face $F$, denoted by $d(F)$, is the sum of
the lengths (number of edges) of the boundary walks of $F$. In the
proof of Theorem~\ref{th:main}, we will need the following two 
simple lemmas.

\begin{lem}\label{lem:1}
Let $H$ be a planar bipartite graph, with bipartition $(U,V)$, such
that all faces of $H$ have degree at least 4, and all vertices of $V$ have
degree at least 2. Then $H$ contains at most $2|U|-4$ faces of degree at least
6.
\end{lem}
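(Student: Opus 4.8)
The plan is to prove the bound by a direct application of Euler's formula together with a double count of edge--face incidences, so that no discharging is really needed. Write $n=|U|+|V|$ for the number of vertices, $m$ for the number of edges, and $f$ for the number of faces of $H$, where $d(F)$ denotes the total length of all boundary walks of a face $F$, as in the statement. Let $f_{\ge 6}$ be the number of faces of degree at least $6$, which is the quantity to bound. The two elementary identities I would rely on are the edge--face incidence count $\sum_F d(F)=2m$, valid for any plane graph with the stated convention that a bridge is traversed twice by the same face, and Euler's formula $n-m+f=1+c$, where $c\ge 1$ is the number of connected components of $H$.

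The first step is to extract the inequality $f_{\ge 6}\le m-2f$. Since every face has degree at least $4$ by hypothesis, and every face of degree at least $6$ contributes an extra $2$ to the incidence sum, one gets $2m=\sum_F d(F)\ge 4f+2f_{\ge 6}$, which rearranges to the desired inequality. I would emphasise that this step uses only the face-degree hypothesis and nothing about the bipartition; in particular it does not rely on faces having even degree, so a stray odd face would cause no trouble. The second ingredient is the bound $m\ge 2|V|$: because $(U,V)$ is a bipartition, every edge has exactly one endpoint in $V$, so $m=\sum_{v\in V}d(v)$, and since each vertex of $V$ has degree at least $2$ this sum is at least $2|V|$.

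Finally I would feed Euler's formula into $f_{\ge 6}\le m-2f$: from $f=1+c-n+m\ge 2-n+m$ we obtain $m-2f\le 2n-m-4=2|U|+2|V|-m-4$, and substituting $m\ge 2|V|$ yields $f_{\ge 6}\le 2|U|-4$. The core inequality chain is short, so the only real care needed is the bookkeeping of Euler's formula when $H$ is disconnected (here $c\ge 1$ is used merely as a lower bound on $f$, so extra components only strengthen the bound) and a check that degenerate small cases are automatically excluded: a graph with no edges, or one whose unique face has degree $0$, violates the hypothesis that all faces have degree at least $4$, so whenever the hypotheses hold the argument applies verbatim. I therefore expect the main (and rather mild) obstacle to be this edge-case and connectivity bookkeeping, rather than the inequality chain itself.
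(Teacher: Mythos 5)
Your proof is correct and follows essentially the same route as the paper: both arguments combine Euler's formula with the two double counts $\sum_F d(F)=2m\ge 4f+2f_{\ge 6}$ and $m=\sum_{v\in V}d(v)\ge 2|V|$ (the paper merely packages these into a single quantity $N=4m$). Your explicit treatment of the disconnected case via $c\ge 1$ matches the inequality form of Euler's formula used implicitly in the paper.
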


\begin{proof}
Assume that $H$ has $n$ vertices, $m$ edges, $f$ faces, and $f_6$
faces of degree at least 6. Let $N$ be the sum of the degrees of the
faces of $H$, plus twice the sum of the degrees of the vertices of
$V$. Observe that $N=4m$, so, by Euler's formula, $N\le 4n+4f-8$. The sum of degrees
of the faces of $H$ is at least $4(f-f_6)+6f_6=4f+2f_6$, and since each vertex
of $V$ has degree at least 2, the sum of the degrees of the vertices
of $V$ is at least $2|V|$. Therefore, $4f+2f_6+4|V|\le 4n+4f-8$. It
follows that $f_6\le 2 |U|-4$, as desired.
\end{proof}

\begin{lem}\label{lem:2}
Let $G$ be a connected planar graph, and let $S=\{F_1,\ldots,F_k\}$ be a set of
$k$ faces of $G$, such that each $F_i$ is bounded by a cycle, and
these cycles are pairwise vertex-disjoint. Then $\sum_{F \not\in S}
(3d(F)-6)\ge \sum_{i=1}^k (3d(F_i)+6)-12$, where the first sum varies
over faces $F$ of $G$ not contained in $S$.
\end{lem}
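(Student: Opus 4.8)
The plan is to evaluate the global quantity $\sum_{F}(3d(F)-6)$, summed over all faces of $G$, and then isolate the contribution of the faces in $S$. First I would recall the two basic identities for a connected plane graph with $n$ vertices, $m$ edges and $f$ faces: the face-degree handshake $\sum_{F} d(F) = 2m$ and Euler's formula $n-m+f=2$. Combining them gives
\[
\sum_{F}(3d(F)-6) = 3\cdot 2m - 6f = 6m - 6f = 6(n-2),
\]
where the sum ranges over all faces of $G$.

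Next I would split this total according to whether a face lies in $S$ or not. Writing $\sum_{F\notin S}(3d(F)-6) = 6(n-2) - \sum_{i=1}^{k}(3d(F_i)-6)$ and substituting into the desired inequality, all the additive constants and the terms in $k$ cancel on both sides, and the statement reduces to the single clean bound $\sum_{i=1}^{k} d(F_i) \le n$. So the whole lemma is equivalent to this counting inequality.

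The key observation, and the only place where the hypotheses on $S$ are used, is that each $F_i$ is bounded by a cycle. For a face whose boundary walk is a cycle, the face degree $d(F_i)$ equals the length of that cycle, which is exactly the number of vertices lying on it. Since the bounding cycles of the faces in $S$ are pairwise vertex-disjoint, the vertex sets they span are disjoint subsets of $V(G)$, so their total size $\sum_{i=1}^{k} d(F_i)$ is at most $n$. This yields the required inequality and completes the proof.

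There is essentially no serious obstacle here; the argument is a short Euler-formula computation. The only points that require care are making sure connectedness is invoked (so that Euler's formula holds with the constant $2$) and correctly identifying $d(F_i)$ with both the cycle length and the number of vertices of the bounding cycle --- it is precisely the vertex-disjointness hypothesis, rather than mere edge-disjointness, that permits summing these counts without overlap.
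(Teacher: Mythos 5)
Your proof is correct and follows essentially the same route as the paper's: both evaluate $\sum_F(3d(F)-6)=6m-6f=6n-12$ via Euler's formula and the face-degree identity, and both reduce the claim to the observation that the pairwise vertex-disjoint bounding cycles give $\sum_{i=1}^k d(F_i)\le n$. The only difference is cosmetic — you make the reduction to $\sum_i d(F_i)\le n$ explicit, while the paper inserts that bound directly into the chain of inequalities.
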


\begin{proof}
Let $n$, $m$, and $f$ denote the number of vertices, edges, and faces
of $G$, respectively. It follows from Euler's formula that the sum of $3d(F)-6$ over all
faces of $G$ is equal to $6m-6f=6n-12\ge 6\sum_{i=1}^k
d(F_i)-12$. Therefore,
$\sum_{F\not\in S} (3d(F)-6)\ge 6\sum_{i=1}^k
d(F_i)-12 - \sum_{i=1}^k (3d(F_i)-6)=\sum_{i=1}^k (3d(F_i)+6)-12$, as desired.
\end{proof}

We are now able to prove Theorem~\ref{th:main}.

\bigskip

\noindent \emph{Proof of Theorem~\ref{th:main}.}
We prove the result by induction on $n\ge 3$.
Let $G$ be a planar digraph with $n$ vertices and digirth $g\ge 4$. We can assume
without loss of generality
that $G$ has no multiple arcs, since $g\ge 4$ and removing one arc from a
collection of multiple arcs with the same orientation does not change
the value of $\tau(G)$. We can also assume that $G$ is connected,
since otherwise we can consider each connected component of $G$ separately
and the result clearly follows from the induction (since $g\ge 4$,
connected components of at most 2 vertices are acyclic and can thus be left
aside). Finally, we can assume that $G$ contains a directed cycle,
since otherwise $\tau(G)=0\le
\min\{\tfrac{5n-5}9,\tfrac{2n-5}{4},\tfrac{2n-6}{g}\}$ (since $n\ge
3$).

\smallskip

Let $\cC$
be a maximum collection of arc-disjoint directed cycles in $G$. Note
that $\cC$ is non-empty.
Fix a planar embedding of $G$. For a given directed cycle $C$ of $\cC$, we
denote by $\overline{C}$ the closed region bounded by $C$, and by
 $\mathring{C}$ the interior of $\overline{C}$. It follows from classical uncrossing techniques (see~\cite{GW97}
for instance), that we can assume without loss of generality that
the directed cycles of $\cC$ are pairwise non-crossing, i.e. for any two
elements $C_1,C_2 \in \cC$, either $\mathring{C_1}$ and $\mathring{C_2}$
are disjoint, or one is contained in the other. We define the partial
order $\preceq$ on $\cC$ as follows: $C_1 \preceq C_2$ if and only if
$\mathring{C_1}\subseteq \mathring{C_2}$. Note that $\preceq$ naturally defines a
rooted forest $\cF$ with vertex set $\cC$: the roots of each of the
components of $\cF$ are the maximal elements of $\preceq$, and the children of
any given node $C\in \cF$ are the maximal elements $C' \preceq C$
distinct from $C$ (the
fact that $\cF$ is indeed a forest follows from the non-crossing
property of the elements of $\cC$).

Consider a node $C$ of $\cF$, and the children $C_1,\ldots,C_k$ of $C$
in $\cF$. We define the closed region $\cR_C=\overline{C}-\bigcup_{1\le
  i \le k} \mathring{C_i}$. Let $\phi_C$ be the sum of $3d(F)-6$, over all faces $F$ of  $G$ lying in
$\cR_C$.

\begin{claim}\label{cl:1} Let $C_0$ be a node of $\cF$ with children
$C_1,\ldots,C_k$. Then $\phi_{C_0}\ge \tfrac{3}2 (g-2)k+\tfrac32g$. Moreover, if
$g\ge 6$, then $\phi_{C_0}\ge \tfrac{3}2 (g-2)k+\tfrac32g+3$.
\end{claim}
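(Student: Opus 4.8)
\emph{Proof proposal.} The plan is to rerun the Euler-formula computation behind Lemma~\ref{lem:2}, but localized to the part of $G$ lying in the region $\cR_{C_0}$. Let $G^*$ be the plane graph obtained from $G$ by keeping exactly the vertices and edges drawn in $\cR_{C_0}$ and then \emph{filling in} the $k+1$ complementary regions, so that the exterior of $C_0$ becomes a single face $F_0$ and each hole $\mathring{C_i}$ becomes a face $F_i$; the remaining faces of $G^*$ are precisely the faces of $G$ lying in $\cR_{C_0}$. Writing $n^*$ and $m^*$ for the numbers of vertices and edges of $G^*$, and $\ell_j$ for the length of $C_j$ (so $d(F_j)=\ell_j\ge g$ for $0\le j\le k$, since each $C_j$ is a directed cycle and $G$ has digirth $g$), Euler's formula applied to $G^*$, after substituting $\sum_{F\in\cR_{C_0}}d(F)=2m^*-\sum_{j=0}^k\ell_j$ and eliminating the face count, gives the exact identity $\phi_{C_0}=6n^*-3\sum_{j=0}^k\ell_j+6k-6$ (assuming $G^*$ connected; I would treat the disconnected case componentwise, where the estimate only improves).

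Next I would isolate where the slack lies. Let $b_V$ be the number of \emph{distinct} vertices lying on $C_0\cup\cdots\cup C_k$, and set $\Delta=\sum_{j=0}^k\ell_j-b_V\ge 0$, the total amount by which the boundary cycles overlap (each vertex counted once per cycle through it, minus one). Since $n^*\ge b_V$, the identity above yields $\phi_{C_0}\ge 3\sum_{j=0}^k\ell_j-6\Delta+6k-6\ge 3(k+1)g-6\Delta+6k-6$. In particular, if the $k+1$ cycles were pairwise vertex-disjoint then $\Delta=0$ and we would get $\phi_{C_0}\ge 3(k+1)g+6k-6$, which beats both target bounds by roughly a factor of two; this is exactly Lemma~\ref{lem:2} applied with $S=\{F_0,\dots,F_k\}$. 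A short computation shows that to reach the claimed bounds it suffices to prove the (generous) overlap estimate $\Delta\le\tfrac14(k+1)g+\tfrac32 k-1$, with a marginally stronger bound needed when $g\ge6$.

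The heart of the proof is therefore to bound $\Delta$, i.e.\ to control how much the boundary cycles can share vertices. Here I would combine two ingredients. First, the digirth: if two boundary cycles meet at two vertices $u,v$, the two $u$--$v$ subpaths they contribute are arc-disjoint, and whenever these are oppositely oriented their union is a directed cycle, hence of length at least $g$; so ``expensive'' contacts consume a full girth's worth of boundary, leaving only the same-oriented (transitive) contacts to be analysed by hand. Second, Lemma~\ref{lem:1}: form the bipartite plane graph $H$ whose two sides are the cycles $\{C_0,\dots,C_k\}$ (so $|U|=k+1$) and the shared vertices, with $C_j$ joined to each shared vertex it passes through. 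This $H$ is the incidence graph of a plane family of nested cycles, hence planar, it is simple and bipartite (so all faces have degree at least $4$), and every shared vertex has degree at least $2$; thus Lemma~\ref{lem:1} caps the number of faces of degree at least $6$ by $2|U|-4=2k-2$, which limits the contacts involving three or more cycles. Feeding these two bounds into the expression for $\Delta$ keeps it within the allowance, and the extra $+3$ when $g\ge6$ comes from the sharper bookkeeping available once $d(F_j)\ge6$ for every boundary cycle.

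The main obstacle is precisely this last step. An arc-disjoint pair of directed cycles with disjoint interiors may still be tangent at many vertices, so any naive bound on $\Delta$ is hopelessly weak; one genuinely has to use the digirth to pay for each contact while Lemma~\ref{lem:1} absorbs the multiplicities of the many-fold contacts. The remaining degenerate situations---a region pinched at a cut vertex, or a single boundary cycle meeting $\cR_{C_0}$ along several arcs---I would handle either by passing to the blocks and components of $G^*$, or by observing that such extra incidences only raise $n^*$ relative to $\sum_j\ell_j$ and hence help rather than hurt.
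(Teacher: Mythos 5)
Your setup is sound: the localized Euler computation giving $\phi_{C_0}=6n^*-3\sum_{j=0}^k\ell_j+6k-6$ is correct and is just a repackaging of Lemma~\ref{lem:2} (your disjoint-cycles case reproduces the paper's first step exactly). The gap is in the reduction of the general case to the ``generous overlap estimate'' $\Delta\le\tfrac14(k+1)g+\tfrac32 k-1$: that inequality is simply false, and no amount of work with the digirth and Lemma~\ref{lem:1} will establish it. Take $k=1$ and two nested, arc-disjoint directed cycles $C_0,C_1$, each of length $L\gg g$, tangent at $p$ vertices. Then $\sum_j\ell_j=2L$, $b_V=2L-p$, so $\Delta=p$, and $p$ can be as large as roughly $2L/3$ (the annulus splits into $p$ regions, each with at least $3$ boundary arcs). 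Since $L$ is unbounded in terms of $g$ and $k$, so is $\Delta$, while your target bound is $\tfrac{g}{2}+\tfrac12$. The claim itself still holds in this configuration — but only because $\sum_j\ell_j=2L$ is then far larger than $(k+1)g$, which is precisely the information you discarded when you replaced $3\sum_j\ell_j$ by $3(k+1)g$ before asking for a bound on $\Delta$ alone. Each unit of $\Delta$ costs you $6$ in your inequality, whereas the digirth can only be made to pay back about $1$ per contact on top of $(k+1)g$; the deficit has to be covered by the excess of $\sum_j\ell_j$ itself, so any correct bookkeeping must retain that sum.

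That is what the paper's proof does: it cuts $\cR_{C_0}$ into the regions $\cD_1,\ldots,\cD_t$ corresponding to the faces of the bipartite incidence graph $H$ (the same $H$ you define), applies Lemma~\ref{lem:2} with $k=1$ to each region to get $\phi_{\cD_i}\ge 3\ell_i-6$, and observes that $3\ell-6\ge\tfrac{3\ell}{2}$ once $\ell\ge 4$, so only the regions with $\ell_i=3$ incur a loss of $\tfrac32$ each. Those are then split into two kinds: regions bounded by two of the cycles (degree-$4$ faces of $H$), each of which forces, via the digirth, an excess $+1$ in $\sum_j|C_j|$ beyond $(k+1)g$ that exactly absorbs the loss; and regions bounded by three cycles (degree-$\ge 6$ faces of $H$), of which Lemma~\ref{lem:1} guarantees there are at most $2k-2$. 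Summing $\tfrac32\sum_i\ell_i=\tfrac32\sum_j|C_j|$ minus these controlled losses yields the claim. I would encourage you to redo your final step along these lines rather than trying to bound $\Delta$ in isolation.
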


Assume first that the cycles $C_0,\ldots,C_k$ are pairwise vertex-disjoint.
Then, it follows from Lemma~\ref{lem:2} that $\phi_{C_0}\ge
(k+1)(3g+6)-12$. Note that since $g\ge 4$, we have $(k+1)(3g+6)-12\ge
\tfrac{3}2 (g-2)k+\tfrac32g$. Moreover, if $g\ge 6$, $(k+1)(3g+6)-12\ge
\tfrac{3}2 (g-2)k+\tfrac32g+3$, as desired. As a
consequence, we can assume that two of the cycles $C_0,\ldots,C_k$
intersect, and in particular, $k\ge 1$.

\smallskip

Consider the following planar bipartite graph $H$: the vertices of the
first partite set of $H$ are the directed cycles
$C_0,C_1,\ldots,C_k$, the vertices of the second partite set of $H$
are the vertices of $G$ lying in at least two cycles among
$C_0,C_1,\ldots,C_k$, and there is an edge in $H$ between some cycle
$C_i$ and some vertex $v$ if and only if $v\in C_i$ in $G$ (see
Figure~\ref{fig:ex}). Observe
that $H$ has a natural planar embedding in which all internal faces
have degree at least 4. Since $k\ge 1$ and at least two of the
cycles $C_0,\ldots,C_k$
intersect, the outerface also has degree at least 4.
Note that the faces $F_1,\ldots,F_t$ of $H$ are in
one-to-one correspondence with the maximal subsets
$\cD_1,\ldots,\cD_t$ of $\cR_{C_0}$ whose interior is connected. Also note that each face of $G\cap \cR_{C_0}$ is in precisely
one region $\cD_i$ and each arc of $\bigcup_{i=0}^{k} C_i$
(i.e. each arc on the boundary of $\cR_{C_0}$) is on the boundary of
precisely one region $\cD_i$. For each region $\cD_i$, let $\ell_i$ be
the number of arcs on the boundary of $\cD_i$, and observe that
$\sum_{i=1}^{t}\ell_i=\sum_{j=0}^{k} |C_j|$. Let $\phi_{\cD_i}$ be the sum of $3d(F)-6$, over all
faces $F$ of $G$ lying in $\cD_i$. It follows from Lemma~\ref{lem:2} (applied with $k=1$) that $\phi_{\cD_i}\ge 3\ell_i-6$, and therefore
$\phi_{C_0}=\sum_{i=1}^{t}\phi_{\cD_i}\ge \sum_{i=1}^{t}(3\ell_i-6)$.

\begin{figure}[htbp]
\begin{center}
\includegraphics[scale=1.2]{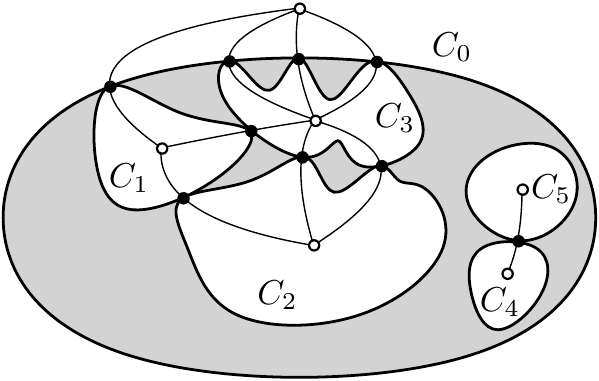}
\caption{The region $\cR_{C_0}$ (in gray) and the planar bipartite graph $H$. \label{fig:ex}}
\end{center}
\end{figure}

\smallskip

A region
$\cD_i$ with $\ell_i\ge 4$ is said to be of \emph{type 1}, and we set
$T_1=\{1\le i\le t \,|\, \cD_i \mbox{ is of type 1}\}$.
Since for any
$\ell \ge 4$ we have $3\ell-6\ge \tfrac{3\ell}2$, it follows from the
paragraph above that the regions $\cD_i$ of type 1 satisfy $\phi_{\cD_i}\ge \tfrac{3\ell_i}2$. Let $\cD_i$ be a region that is not of type 1. Since $G$ is simple,
$\ell_i=3$. Assume first that $\cD_i$ is bounded by (parts of) two
directed cycles of $\cC$ (in other words, $\cD_i$ corresponds to a face of
degree four in the graph $H$). In this case we say that $\cD_i$
is of \emph{type 2} and we set
$T_2=\{1\le i\le t \,|\, \cD_i \mbox{ is of type 2}\}$. Then the boundary of $\cD_i$
consists in two consecutive arcs $e_1,e_2$ of some directed cycle
$C^+$ of $\cC$, and one arc $e_3$ of some
directed cycle $C^-$ of $\cC$. Since $g\ge 4$, these three arcs do not form a
directed cycle, and therefore their orientation is transitive. It follows that
$|C^+|\ge g+1$, since otherwise the directed cycle obtained from $C^+$
by replacing $e_1,e_2$ with $e_3$ would have length $g-1$,
contradicting that $G$ has digirth at least $g$. Consequently,
$\sum_{i=0}^k |C_i|\ge (k+1)g+|T_2|$. If a region $\cD_i$ is not of
type 1 or 2, then $\ell_i=3$ and each of the 3 arcs on the boundary of
$\cD_i$ belongs to a different directed cycle of $\cC$. In other words, $\cD_i$
corresponds to some face of degree 6 in the graph $H$. Such a
region $\cD_i$ is said to be of \emph{type 3}, and we set
$T_3=\{1\le i\le t \,|\, \cD_i \mbox{ is of type 3}\}$. It follows from Lemma~\ref{lem:1} that the
number of faces of degree at least 6 in $H$ is at most
$2(k+1)-4$. Hence, we have $|T_3|\le 2k-2$. 

Using these bounds on
$|T_2|$ and $|T_3|$, together with the fact that for any $i\in T_2\cup
T_3$ we have $\phi_{\cD_i} \ge 3\ell_i-6=3=\tfrac{3\ell_i}2 -\tfrac{3}2$, we
obtain:

\begin{eqnarray*}
\phi_{C_0} & = & \sum_{i\in T_1} \phi_{\cD_i} + \sum_{i\in T_2}
                 \phi_{\cD_i} + \sum_{i\in T_3} \phi_{\cD_i} \\
& \ge & \sum_{i=1}^t  \tfrac{3\ell_i}2 - \tfrac32 |T_2| - \tfrac32
        |T_3|\\
& \ge & \tfrac32\, \sum_{i=0}^k  |C_i| - \tfrac32 |T_2| - \tfrac32
        (2k-2)\\
& \ge & \tfrac32 (k+1)g-3k+3 \, = \,  \tfrac32 (g-2)k+\tfrac32g+3, 
\end{eqnarray*}

as desired. This concludes the proof of Claim~\ref{cl:1}.\hfill $\Box$

\medskip

Let $C_1,\ldots,C_{k_\infty}$ be the $k_\infty$ maximal elements of
$\preceq$. 
We denote by $\cR_\infty$ the closed
region obtained from the plane by removing $\bigcup_{i=1}^{k_\infty} \mathring{C_i}$. Note that each face of
$G$ lies in precisely one of the regions $\cR_C$ ($C \in \cC$) or $\cR_\infty$.
Let $\phi_\infty$ be the sum of $3d(F)-6$, over all faces $F$ of  $G$ lying in
$R_\infty$. A proof similar to that of Claim~\ref{cl:1} shows that
$\phi_\infty\ge \tfrac32 k_\infty (g-2)+3$, and if $g\ge 6$, then $\phi_\infty\ge \tfrac32 k_\infty (g-2)+6$.

We now compute the sum $\phi$ of $3d(F)-6$ over all faces $F$ of
$G$. By Claim~\ref{cl:1},

\begin{eqnarray*}
\phi & =  & \phi_\infty + \sum_{C \in \cF}  \phi_C\\
& \ge &\tfrac32 k_\infty (g-2)+3 + (|\cC|-k_\infty) \tfrac32(g-2)+ |\cC|
        \cdot \tfrac32g \\
& \ge & (3g-3) |\cC|+3.
\end{eqnarray*}

If $g\ge 6$, a similar computation gives $\phi \ge 3g |\cC|+6 $.
On the other hand, it easily follows from Euler's formula that
$\phi=6n-12$. Therefore, $|\cC|\le \tfrac{2n-5}{g-1}$, and if $g\ge
6$, then $|\cC|\le \tfrac{2n-6}{g}$.

\medskip

Let $A$ be a set of arcs of $G$ of minimum size such that $G-A$ is acyclic.
It follows from the Lucchesi-Younger theorem~\cite{LY78} (see
also~\cite{GR15}) that $|A|=|\cC|$. Let $X$ be a set of vertices
covering the arcs of $A$, such that $X$ has minimum size. Then $G-X$
is acyclic. If $g=5$ we have $|X|\le
|A|=|\cC|\le \tfrac{2n-5}{4}$ and if $g\ge 6$, we have $|X|\le
|A|=|\cC|\le \tfrac{2n-6}{g}$, as desired. Assume now that $g=4$. In
this case $|A|=|\cC|\le \tfrac{2n-5}{3}$. It
was observed by Golowich and Rolnick~\cite{GR15} that $|X|\le
\tfrac13(n+|A|)$ (which easily follows from the fact that any graph on
$n$ vertices and $m$ edges contains an independent set of size at
least $\tfrac{2n}3-\tfrac{m}3$),
and thus, $|X|\le \tfrac{5n-5}{9}$.
This concludes the proof of Theorem~\ref{th:main}.\hfill $\Box$


 \section*{Final remark}

A natural problem is to determine the
precise value of $f_g(n)$, or at least its asymptotical value as $g$
tends to infinity. We
believe that $f_g(n)$ should be closer to the lower bound of $\tfrac{n-1}g$,
than to our upper bound of $\tfrac{2n-6}g$. 

For a digraph $G$, let $\tau^*(G)$ denote the the infimum real number $x$ for which
there are weights in $[0,1]$ on each vertex of $G$, summing up
to $x$, such that for each directed cycle $C$, the sum of the
weights of the vertices lying on $C$ is at least $1$. Goemans and Williamson~\cite{GW97} conjectured that for
any planar digraph $G$, $\tau(G)\le \tfrac32 \tau^*(G)$. If a planar
digraph $G$ on $n$ vertices has digirth at least $g$, then clearly $\tau^*(G)\le
\tfrac{n}{g}$ (this can be seen by assigning weight $1/g$ to
each vertex). Therefore, a direct consequence of the conjecture of
Goemans and Williamson would be that $f_g(n)\le \tfrac{3n}{2g}$.

\end{document}